\newtheorem{Theorem}{\quad Theorem}[section]
\newtheorem{question}{\quad Question}[section]
\newtheorem{remark}{\quad Remark}[section]
\newtheorem{Lemma}[Theorem]{\quad Lemma}
\newtheorem{example}[Theorem]{\quad Example}
\date{}
\begin{document}


\centerline{}

\centerline {\Large{\bf Entire Dirichlet series with monotonous}}
\centerline {\Large{\bf coefficients and logarithmic h-measure}}

\centerline{}

\centerline{\bf {S. I. Panchuk}}

\centerline{}

\centerline{Department of Mechanics and Mathematics,}

\centerline{Ivan Franko National University of L'viv, Ukraine}

\centerline{s.panchuk@lnu.edu.ua}

\centerline{}

\centerline{\bf {T. M. Salo}}

\centerline{}

\centerline{Institute of Applied Mathematics and Fundamental Sciences,}

\centerline{National University "Lvivs'ka Politekhnika", Lviv, Ukraine}

\centerline{tetyan.salo@gmail.com}

\centerline{}

\centerline{\bf {O. B. Skaskiv}}

\centerline{}

\centerline{Department of Mechanics and Mathematics,}

\centerline{Ivan Franko National University of L'viv, Ukraine}

\centerline{olskask@gmail.com}

\centerline{}

\begin{abstract}
Let $F$ be an entire function represented by absolutely convergent
for all $z\in\mathbb{C}$ Dirichlet series of the form $ F(z) =
\sum\nolimits_{n=0}^{+\infty} a_{n}e^{z\lambda_{n}},$\  where a
sequence $(\lambda_n)$ such that $\lambda_n\in\mathbb{R}\ \
(n\geq0)$, $\lambda_n\not=\lambda_k$ for any $n\not=k$ and $(\forall
n\geq 0):\ 0\leq\lambda_n<\beta:=\sup\{\lambda_j:\ j\geq0\}\leq
+\infty.$ {Let $h$  be non-decrease positive continuous function on
$[0,+\infty)$ and $\Phi$ increase positive continuous on
$[0,+\infty)$ function.}

 In this paper we {find} the condition {on} $(\mu_n)$ and $(\lambda_n)$ {such that}
the relation $F(x+iy)=(1+o(1))a_{\nu(x, F)}e^{(x+iy)\lambda_{\nu(x, F)}}
$ holds as
 $x\to +\infty$\ outside some set $E$ of finite logarithmic  $h$-measure uniformly in $y\in\mathbb{R}$.
\end{abstract}

{\bf Subject Classification:} 30B20, 30D20 \\

{\bf Keywords:} entire Dirichlet series, h-measure, exceptional set.

\section{Introduction}
 Let
$\mathcal{L}$ be the class of positive continuous increasing
functions on $[0;+\infty)$  and $\mathcal{L}_{+}$  the subclass of
functions $\Phi\in\mathcal{L}$ such that $\Phi(t)\to +\infty$\
$(t\to +\infty)$. By $\varphi$ we denote inverse function to
$\Phi\in \mathcal{L}$.

Let $\mathcal D$ be a class of entire (absolutely convergent
in the whole complex plane $\mathbb{C}$) Dirichlet series of the
form
\begin{equation}\label{e1}
F(z) = \sum\limits_{n=0}^{+\infty} a_{n}e^{z\lambda_{n}},\quad
z\in\mathbb{C},
\end{equation}
where a sequence $(\lambda_n)$ such that $\lambda_n\in\mathbb{R}\
\ (n\geq0)$, $\lambda_n\not=\lambda_k$ for any $n\not=k$ and
\begin{equation}\label{obm}
    (\forall n\geq 0): \quad 0\leq\lambda_n<\beta:=\sup\{\lambda_j:\ j\geq0\}\leq +\infty.
\end{equation}
 For $F\in \mathcal D$ and $x\in\mathbb{R}$ we denote

\smallskip\centerline{$ M(x,F)=\sup\{|F(x+iy)|\colon
y\in\mathbb{R}\},\quad m(x,F)=\inf\{|F(x+iy)|\colon
y\in\mathbb{R}\},$}

\smallskip\noindent and by

\smallskip\centerline{$\mu(x,F)=\max\{|a_n|e^{x\lambda_n}\colon n\ge 0\},\
\nu(x,F)=\max\{n\colon|a_n|e^{x\lambda_n}=\mu(x,F)\}$}

\smallskip\noindent  the maximal term and central index of
series~\eqref{e1}, respectively.

By $\mathcal{D}_{a}$ we denote a subclass of Dirichlet series
$F\in\mathcal D$ with a fixed sequence $a=(|a_n|)$, $|a_n|\searrow
0$\ $(n_0\leq n\to +\infty)$, and for $\Phi\in \mathcal{L}$ by
$\mathcal{D}_{a}(\Phi)$  subclass of functions $F\in\mathcal{D}_{a}$
such that  $\ln\mu(x,F)\geq x\Phi(x)$\ $(x\geq x_0)$.

Let \[\mu_n:=-\ln|a_n|\hskip30pt (n\geq0).\]

In paper \cite{Ska94} one can find such theorem:

\medskip\noindent{\bf Theorem A (O.B. Skaskiv, 1994 \cite{Ska94}).} {\sl For every entire function $F\in \mathcal{D}_a$    relation
\begin{equation}\label{e3}
F(x+iy)=(1+o(1))a_{\nu(x, F)}e^{(x+iy)\lambda_{\nu(x, F)}}
\end{equation}
holds as $x\to +\infty$\ outside some set $E$ of finite logarithmic
measure, i.e. $\text{\rm log-meas}(E):=\int_{E}d\ln x<+\infty$,\
uniformly in $y\in\mathbb{R}$, if and only if
\begin{equation}\label{e4}
\sum\limits_{n=n_0}^{+\infty}\frac1{\mu_{n+1}-\mu_{n}}<+\infty.
\end{equation}
}

It is easy to see that relation \eqref{e3} holds for $x\to +\infty$\
$(x\notin E)$\  uniformly  in $y\in\mathbb{R}$, if and only if
\begin{equation}\label{osn1}
M(x,F)\sim \mu(x,F)\quad (x\to +\infty,\ x\notin E),
\end{equation}
hence it follows
\begin{equation}\label{osn2}
M(x,F)\sim m(x,F)\quad (x\to +\infty,\ x\notin E).
\end{equation}

The  finiteness of logarithmic measure of an exceptional set $E$ in
Theorem A is the sharp estimate. {It} follows from the {following}
theorem.

\medskip\noindent{\bf Theorem B (Ya.Z. Stasyuk, 2008 \cite{Sta}).} {\sl For every increasing sequence  $(\mu_n)$, such that condition
\eqref{e4} satisfies and for any function $h\in \mathcal{L}_+$ there
exist an entire Dirichlet series $F\in\mathcal{D}_a$ with
$|a_n|=\exp\{-\mu_n\}$,
 a set $E$ and a constant $d>0$ such that
 \begin{equation}\label{ner}
(\forall x\in E)\colon\quad   M(x,F)\geq(1+d) \mu(x, F),\quad
M(x,F)\geq (1+d) m(x,F)
 \end{equation}
and
$$
\text{\rm h-log-meas}(E):=\int_{E\cap[1, +\infty)}h(x) d\ln
x=+\infty.
$$}

Due to Theorem B the natural {\bf question} arises: {what conditions
must satisfy the entire Dirihlet series $F\in\mathcal{D}_a$ {such}
that relation \eqref{e3} holds for $x\to +\infty$\ outside some set
$E$ of finite  logarithmic h-measure,  i.e. $\text{\rm
h-log-meas}(E)<+\infty$?} In this article we give the answer to this
question. Our main result is the following.

\begin{Theorem}\label{the1} {\sl Let  $(\mu_n)$ be a sequence such that  condition \eqref{e4}
holds, $h\in\mathcal{L}_{+}, \Phi\in \mathcal{L}$ and
$F\in\mathcal{D}_{a}(\Phi)$. If
\begin{gather}\label{3mm}
(\forall\
b>0)\colon\quad\sum\limits_{n=n_0}^{+\infty}h\left(\varphi(\lambda_{n})\cdot\Big(1+\frac{b}{\mu_{n+1}-\mu_{n}}\Big)\right)\dfrac{1}{\mu_{n+1}-\mu_{n}}<+\infty,
\end{gather}
then the relation \eqref{e3} holds as
 $x\to +\infty$\ outside some set $E$ of finite logarithmic  $h$-measure uniformly in $y\in\mathbb{R}$.}
\end{Theorem}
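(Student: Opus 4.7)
The plan is to follow Skaskiv's proof of Theorem~A, modified by inserting the weight $h$ and the hypothesis $F\in\mathcal{D}_a(\Phi)$ at the step where the exceptional set is constructed. Writing $\alpha_n(x):=|a_n|e^{x\lambda_n}$ and $N=\nu(x,F)$, the bound
\[
|F(x+iy)-a_Ne^{(x+iy)\lambda_N}|\le\sum_{n\ne N}\alpha_n(x)
\]
reduces the statement (uniformly in $y\in\mathbb{R}$) to showing that
\[
\frac{1}{\mu(x,F)}\sum_{n\ne N}\alpha_n(x)=o(1)\qquad(x\to+\infty,\ x\notin E)
\]
for some set $E$ of finite logarithmic $h$-measure.

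First I would exploit convexity of the Newton majorant of the points $(\lambda_n,\mu_n)$: passing to the subsequence of its vertices, consecutive terms coincide at the break-point $x_n:=(\mu_{n+1}-\mu_n)/(\lambda_{n+1}-\lambda_n)$, the sequence $(x_n)$ is non-decreasing, and $\nu(x,F)=n$ on $[x_{n-1},x_n]$. For a parameter $b>0$ I would set
\[
I_n(b):=\Bigl[x_n,\,x_n\Bigl(1+\frac{b}{\mu_{n+1}-\mu_n}\Bigr)\Bigr],\qquad E(b):=\bigcup_{n\ge n_0}I_n(b).
\]
Computing $\alpha_{n+1}(x)/\alpha_n(x)=\exp\!\bigl((\lambda_{n+1}-\lambda_n)(x-x_n)\bigr)$ and using convexity, one finds that for $x\notin E(b)$ with $N=\nu(x,F)$ the estimate $\alpha_n(x)/\alpha_N(x)\le e^{-b|n-N|}$ holds, so that
\[
\frac{1}{\mu(x,F)}\sum_{n\ne N}\alpha_n(x)\le\frac{2}{e^b-1}\qquad(x\notin E(b)).
\]

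Next I would estimate the $h$-log-measure of $E(b)$. The hypothesis $F\in\mathcal{D}_a(\Phi)$ gives $x\lambda_N-\mu_N=\ln\mu(x,F)\ge x\Phi(x)$, and evaluating at $x=x_n$ yields the key lower bound $x_n\ge\varphi(\lambda_n)$ for all large $n$. Therefore
\[
\int_{I_n(b)}h(x)\,\frac{dx}{x}\le h\!\left(\varphi(\lambda_n)\Bigl(1+\tfrac{b}{\mu_{n+1}-\mu_n}\Bigr)\right)\ln\!\Bigl(1+\tfrac{b}{\mu_{n+1}-\mu_n}\Bigr);
\]
summing over $n$ and using $\ln(1+t)\le t$ shows, by hypothesis~\eqref{3mm}, that $\text{h-log-meas}(E(b))<+\infty$ for every $b>0$. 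A standard diagonal argument $b=b_k\to+\infty$ along an exhausting family of segments $[X_k,X_{k+1}]$ then upgrades the uniform constant $2/(e^b-1)$ to the desired $o(1)$, producing a single global exceptional set of finite $h$-log-measure.

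The hard parts will be (i) the precise justification of the inequality $x_n\ge\varphi(\lambda_n)$, which uses the $\Phi$-growth hypothesis together with the reduction to Newton-polygon vertices and cannot be done term-by-term for arbitrary, possibly non-monotone $\lambda_n$; and (ii) the diagonalization $b_k\to+\infty$: the partial measures $\text{h-log-meas}(E(b_k)\cap[X_k,X_{k+1}])$ must be made summable by letting the cut-offs $X_k$ grow fast enough, which is possible precisely because \eqref{3mm} is assumed for \emph{every} $b>0$ rather than only for some fixed $b$.
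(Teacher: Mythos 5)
Your route is the direct Newton--majorant method of Skaskiv's original proof of Theorem~A, whereas the paper explicitly abandons that method and instead works with the conjugate series $f^*(s)=\sum_n e^{\lambda_n}e^{s\mu_n^*}$ with perturbed exponents $\mu_n^*=\mu_n+\Delta_n$; the perturbation $\Delta_n$ is built so that Lemma~\ref{lealpha} yields the two-sided decay $|a_n|e^{x\lambda_n}\le \mu(x,F)\exp\{-\delta|n-\nu(x,F)|/(1+|\tau_k|)\}$ for \emph{every} index $n$ at once. Your sketch, as written, has a concrete gap at the central estimate. With the one-sided intervals $I_n(b)=[x_n,\,x_n(1+b/(\mu_{n+1}-\mu_n))]$ the claim that $\sum_{n\ne N}\alpha_n(x)\le \frac{2}{e^{b}-1}\,\mu(x,F)$ for all $x\notin E(b)$ is false: for $x$ just to the \emph{left} of a break point $x_N$ one has $\alpha_{N+1}(x)/\alpha_N(x)=\exp\{(\lambda_{N+1}-\lambda_N)(x-x_N)\}\to 1$, and such $x$ lie outside every $I_n(b)$. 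Deleting a right neighbourhood of each $x_j$ only controls the terms $n<N$; to get $\alpha_n(x)\le e^{-b(n-N)}\alpha_N(x)$ for $n>N$ you must also delete the left neighbourhoods $[x_n-b/(\lambda_{n+1}-\lambda_n),\,x_n]$, which have the same logarithmic length, so the $h$-log-measure bound survives with an extra factor $2$ --- but the symmetrization is needed, not optional.

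Two further points. First, the inequality you actually use is $x_n\le\varphi(\lambda_n)$ (from $x_n\Phi(x_n)\le x_n\lambda_n-\mu_n\le x_n\lambda_n$, hence $\Phi(x_n)\le\lambda_n$), not the ``lower bound $x_n\ge\varphi(\lambda_n)$'' you state; your displayed estimate for $\int_{I_n(b)}h(x)\,d\ln x$ requires the $\le$ direction, so this is a slip of sign rather than of substance. Second, and more seriously, the reduction to Newton-polygon vertices is not a side issue: since $(\lambda_n)$ is not assumed monotone, many indices never occur as central index, yet they all appear in $\sum_{n\ne N}\alpha_n(x)$, and the geometric decay $e^{-b|n-N|}$ in the \emph{original} index $n$ --- which is what lets you dominate the measure sum by the series \eqref{3mm}, indexed over all $n$ --- does not follow from the chain argument along vertices alone. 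You list this under ``hard parts,'' but it is precisely the difficulty the paper's construction of $\Delta_n$ and Lemma~\ref{lealpha} are designed to eliminate, so the proposal defers the core technical content of the proof. The final diagonalization ($b_k\to+\infty$ over $[X_k,X_{k+1}]$) does match the paper's argument with $\delta_n=n$, $\varepsilon_n=2^{-n}$.
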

The method of proof of Theorem \ref{the1} differs from the method of
proofs corresponding statements in \cite{Ska94,Ska} and is close to
the methods of proofs from papers \cite{SkaSal,LutsSka,SalSka15}.


\section{Proof of Theorem \ref{the1}}
 We denote $\Delta_0=0$ and for $n\geq 1
$
\begin{equation}\label{delta}
\Delta_n :=\delta\cdot\sum_{j=0}^{n-1}\left(\mu_{j+1}-\mu_j\right)
\sum_{m=j+1}^{+\infty}\left(\frac{1}{\mu_{m}-\mu_{m-1}}+\frac{1}{\mu_{m+1}-\mu_m}\right),\quad\delta
>0.
\end{equation}
It easy to see that
\begin{equation}\label{delta2}
\Delta_n\geq n\delta\quad (n\geq 0),\qquad \Delta_n=o(\mu_n)\quad
(n\to +\infty).
\end{equation}
We put $b_n=e^{\lambda_n}$  and consider the Dirichlet series
\[f(s)=\sum_{n=0}^{+\infty}b_n\ e^{s\mu_n},\quad\mu_n=-\ln|a_n|.\]
The condition
$\sum\limits_{n=0}^{+\infty}1/{(\mu_{n+1}-\mu_{n})}<+\infty$ implies
$n^2=o(\mu_n)$\ $(n\to +\infty)$  (see \cite {Ska94,Ska}), thus $\ln
n =o(\mu_n)$ $(n\to+\infty)$. $F\in\mathcal{D}_a$ so
 $\lim\limits_{n\to+\infty}\dfrac{-\ln
|a_n|}{\lambda_n}=+\infty.$ Therefore by Valiron's formulae for the
abscissa of absolute convergence (\cite[p.115]{Leont}) we get
\begin{gather*}\sigma_{\text{abs}}(f)=\varliminf\limits_{n\to+\infty}\frac{-\ln
b_n}{\mu_n}=\lim\limits_{n\to+\infty}\Big(\frac{\ln
|a_n|}{\lambda_n}\Big)^{-1}=0.
\end{gather*}
Now we consider Dirichlet series
\[{f}_q(s)=\sum_{n=0}^{+\infty}\frac{b_n}{\alpha^q_n}\ e^{s\mu_n},\quad \alpha_n=e^{\Delta_n},\ q\in\mathbb{R}.  \]
From the second relation in \eqref{delta2} and the condition
$\sigma_{\text{abs}}(f)=0$ it follows that
$$
\sigma_{\text{abs}}(f_q)=\varliminf\limits_{n\to+\infty}\frac{-\ln
b_n+q\Delta_n}{\mu_n}=\sigma_{\text{abs}}(f)+q\cdot\lim\limits_{n\to+\infty}\frac{\Delta_n}{\mu_n}=0
$$
for any $q\in\mathbb{R}$. Therefore the Dirichlet series of the form
\begin{gather*}
{f}^*(s)=\sum_{n=0}^{+\infty}{b_n} e^{s\mu_n^*},\quad
\mu_n^*=\mu_n+\Delta_n,
\end{gather*}
is absolutely convergent in the whole half-plane
$\Pi_0:=\{s\colon\text{\rm Re }s<0\}$ and $\sigma_{\text{abs}}(f^*)=
0.$ Indeed, for every fixed $s\in\Pi_0$ we have as $ q=-\text{\rm Re
}s$
\begin{gather*}
\Big|{b_n} e^{s\mu_n^*}\Big|= \Big|\frac{b_n}{\alpha_n^q}
e^{s\mu_n}\Big|\quad (\forall\ n\geq 0).
\end{gather*}
But $\sigma_{\text{abs}}(f_q)=0,$ thus $\sigma_{\text{abs}}(f^*)\geq
0.$ In the other hand ${b_n}
e^{s\mu_n^*}\Big|_{s=0}=e^{\lambda_n}\not\to 0$\ $(n\to +\infty)$,
hence $\sigma_{\text{abs}}(f^*)= 0.$

From condition \eqref{obm} (see proof of Lemma 2 in
\cite[p.121--122]{Ska94}) we get
$$
\nu(x, f^*)\to +\infty\quad (x\to-0).
$$

We need 
the following lemma (compare, for example,
\cite{SkaSal,LutsSka,SalSka15}).
\begin{Lemma}\label{lealpha} {\sl For all $n\geq0$ and $k\geq1$   inequality
\begin{equation}\label{5}
   \frac{\alpha_n}{\alpha_k}\ e^{\tau_k(\mu_n-\mu_k)}\leq e^{-\delta |n-k|}
\end{equation}
holds, where
\[\tau_k:=t_k+\dfrac{\delta}{\mu_k-\mu_{k-1}},
\hskip30pt t_k:=\dfrac{\Delta_{k-1}-\Delta_{k}}{\mu_k-\mu_{k-1}}.\]
}
\end{Lemma}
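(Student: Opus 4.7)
The plan is to pass to logarithms and thereby reduce \eqref{5} to the equivalent additive inequality
\[
\Delta_n-\Delta_k+\tau_k(\mu_n-\mu_k)\leq -\delta|n-k|.
\]
First I would unpack $\tau_k$. From \eqref{delta}, $\Delta_k-\Delta_{k-1}=\delta(\mu_k-\mu_{k-1})S_k$ with
$S_k:=\sum_{m=k}^{+\infty}\bigl(\tfrac{1}{\mu_m-\mu_{m-1}}+\tfrac{1}{\mu_{m+1}-\mu_m}\bigr)$,
so that $t_k=-\delta S_k$ and $\tau_k=\delta\bigl(\tfrac{1}{\mu_k-\mu_{k-1}}-S_k\bigr)$. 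This already makes the two competing pieces of $\tau_k$ transparent and suggests that one of them must ultimately cancel against a matching term from $\Delta_n-\Delta_k$.

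Next I would split into the two cases $n\geq k$ and $n<k$. In the case $n\geq k$, writing $\gamma_j:=\mu_{j+1}-\mu_j$, one has $\Delta_n-\Delta_k=\delta\sum_{j=k}^{n-1}\gamma_j S_{j+1}$ and $\mu_n-\mu_k=\sum_{j=k}^{n-1}\gamma_j$, so that after dividing by $\delta$ the required inequality becomes
\[
\sum_{j=k}^{n-1}\gamma_j(S_{j+1}-S_k)+\frac{1}{\gamma_{k-1}}\sum_{j=k}^{n-1}\gamma_j\leq -(n-k).
\]
Since $S_{j+1}-S_k=-\sum_{m=k}^{j}\bigl(\tfrac{1}{\gamma_{m-1}}+\tfrac{1}{\gamma_m}\bigr)$ for $j\geq k$, I would expand the resulting double sum and isolate two distinguished diagonals: the contribution of $m=j$ from the $1/\gamma_m$ piece equals $-(n-k)$, and the contribution of $m=k$ from the $1/\gamma_{m-1}$ piece equals $-\frac{1}{\gamma_{k-1}}\sum_{j=k}^{n-1}\gamma_j$, which exactly cancels the residual term on the left. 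All remaining off-diagonal summands are manifestly nonpositive, and the inequality follows.

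The case $n<k$ is handled symmetrically: here $S_{j+1}-S_k=\sum_{m=j+1}^{k-1}\bigl(\tfrac{1}{\gamma_{m-1}}+\tfrac{1}{\gamma_m}\bigr)\geq 0$; the diagonal $m=j+1$ of the $1/\gamma_{m-1}$ piece supplies $k-1-n$, the $j=k-1$ summand of $\tfrac{1}{\gamma_{k-1}}\sum_{j=n}^{k-1}\gamma_j$ supplies the missing $1$, and the remaining terms are nonnegative, giving a total $\geq k-n$. The only delicate point of the argument is the combinatorial bookkeeping of these double sums: identifying which diagonals telescope against the $1/(\mu_k-\mu_{k-1})$ contribution of $\tau_k$ and which produce the $|n-k|$ count. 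Once those diagonals are correctly isolated, the rest is direct manipulation of the defining formula \eqref{delta}.
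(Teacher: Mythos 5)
Your proof is correct. It performs the same initial reduction as the paper --- pass to logarithms and verify the additive inequality $\Delta_n-\Delta_k+\tau_k(\mu_n-\mu_k)\le-\delta|n-k|$ directly from the defining series \eqref{delta} --- but the verification mechanism is organized differently. The paper writes the left-hand side as $-\sum_{j=k+1}^{n}(t_j-\tau_k)(\mu_j-\mu_{j-1})$ (and symmetrically for $n\le k-1$), replaces $\tau_k$ by $\tau_{j-1}$ using the monotonicity coming from \eqref{tau3}, namely $\tau_{j}-\tau_{j-1}=2\delta/(\mu_{j}-\mu_{j-1})>0$, and then uses the telescoping identity $(t_j-\tau_{j-1})(\mu_j-\mu_{j-1})=\delta$, so that each summand contributes exactly $-\delta$. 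You instead expand everything into a double sum over the tails $S_k$ and isolate two diagonals: one produces the count $|n-k|$, the other cancels the $\delta/(\mu_k-\mu_{k-1})$ correction inside $\tau_k$, and the discarded off-diagonal terms all have the favourable sign. The two arguments are equivalent in substance --- your nonpositive (resp.\ nonnegative) off-diagonal terms are precisely what the paper's monotonicity step discards --- but the paper's single-sum formulation is shorter and makes the role of the correction term in $\tau_k$ more transparent, while yours needs no auxiliary identities for $(\tau_k)$ beyond the definition of $\Delta_n$; both the case split and the diagonal bookkeeping in your version check out, including the boundary cases $n=k$ and $n=0$.
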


\begin{proof}[Proof of Lemma \ref{lealpha}]
We remark that
\begin{gather}
t_k=-\delta\cdot\sum\limits_{m=k}^{+\infty}\left(\dfrac{1}{\mu_m-\mu_{m-1}}+\dfrac{1}{\mu_{m+1}-\mu_m}\right),
\label{tau2}\\
\tau_k=-2\delta\cdot\sum\limits_{m=k+1}^{+\infty}\dfrac{1}{\mu_m-\mu_{m-1}},
\label{tau}\\
\tau_{k+1}-\tau_{k}=\dfrac{2\delta}{\mu_{k+1}-\mu_{k}}\quad (k\geq1).\label{tau3}
\end{gather}

Since \[\ln\alpha_n-\ln\alpha_{n-1}=\Delta_n-\Delta_{n-1}=-t_n(\mu_n-\mu_{n-1}),\]for $n\geq k+1$  we have \[\ln\dfrac{\alpha_n}{\alpha_k}+\tau_k(\mu_n-\mu_k)=
-\sum\limits_{j=k+1}^{n}t_j(\mu_j-\mu_{j-1})+\tau_k\sum\limits_{j=k+1}^{n}(\mu_j-\mu_{j-1})=\]
\[=-\sum\limits_{j=k+1}^{n}\left(t_j-\tau_k\right)\left(\mu_j-\mu_{j-1}\right)\leq
-\sum\limits_{j=k+1}^{n}\left(t_j-\tau_{j-1}\right)\left(\mu_j-\mu_{j-1}\right)=\]
\[=-\sum\limits_{j=k+1}^{n}\delta=-(n-k)\cdot\delta.\]
Similarly, for $n\leq k-1$ we obtain
\[
\ln\dfrac{\alpha_n}{\alpha_k}+\tau_k(\mu_n-\mu_k)=-\ln\dfrac{\alpha_k}{\alpha_n}-\tau_k(\mu_k-\mu_n)=
\]
\[
=\sum\limits_{j=n+1}^{k}t_j(\mu_j-\mu_{j-1})-\tau_k\sum\limits_{j=n+1}^{k}(\mu_j-\mu_{j-1})=
-\sum\limits_{j=n+1}^{k}\left(\tau_k-t_j\right)\left(\mu_j-\mu_{j-1}\right)\leq
\]
\[
\leq-\sum\limits_{j=n+1}^{k}\left(\tau_{j}-t_j\right)\left(\mu_j-\mu_{j-1}\right)=-\sum\limits_{j=n+1}^{k}\delta=-(k-n)\cdot\delta
\]
and Lemma 1 is proved.
\end{proof}

We remark that from definitions of $\tau_{k}$ and $t_{k}$ (see
\eqref{tau}, \eqref{tau2}) and the condition
$\sum\limits_{k=0}^{+\infty}1/(\mu_{k+1}-\mu_{k})<+\infty$ it follows that
there exists $k_0(\delta)$ such that
\[\tau_{k}\geq -1\quad (k\geq k_0(\delta)),\qquad \tau_{k} <0\quad
(k\geq 1).\]

Let $J$ be a set of the values of the central index $\nu(\sigma,
f^*)$, i.e.
$$
J=\big\{k\in\mathbb{N}\colon (\exists\ \sigma <0)[\nu(\sigma,
f^*)=k]\big\}.
$$
{Denote by} $(R_{k})$  a sequence of the points of the springs of
$\nu(\sigma, f^*)$, enumerate such that $\nu(\sigma, f^*)=k$ for
$\sigma\in[R_{k},R_{k+1})$ in the case $R_{k}<R_{k+1}$. Then for
$\sigma\in[R_{k},R_{k+1})$, $k\in J$ from Lemma \ref{lealpha} we
have
 \[{b_n}e^{\sigma\mu_n^*}\leq{b_{k}}e^{\sigma\mu_{k}^*}\Longleftrightarrow\
 \frac{b_ne^{\sigma\mu_n}}{b_ke^{\sigma\mu_{k}}}\leq\dfrac{\alpha_n^{|\sigma|}}{\alpha_{k}^{|\sigma|}}\leq e^{-|\sigma|\tau_{k}(\mu_n-\mu_{k})} e^{-|\sigma||n-k|\cdot\delta}\]
for all $n\geq0$. Hence,
\[ \frac{b_ne^{(\sigma +|\sigma|\tau_{k})\mu_n}}{b_ke^{(\sigma +|\sigma|\tau_{k})\mu_{k}}}\leq e^{-|\sigma||n-k|\cdot\delta}\]
i.e., for all $k\in J$ and for every
$\sigma^*\in\big[R_{k}(1+|\tau_{k}|),R_{k+1}(1+|\tau_{k}|)\big)$
\[\dfrac{b_ne^{\sigma^*\mu_n}}{b_ke^{\sigma^*\mu_{k}}}\leq \exp\Big\{-\frac{|\sigma^*||n-k|\cdot\delta}{1+|\tau_{k}|}\Big\}.\]
Thus, as
$x=\dfrac{1}{|\sigma^*|}\in\left[-\dfrac{1}{R_{k}(1+|\tau_{k}|)},-\dfrac{1}{R_{k+1}(1+|\tau_{k}|)}\right)$
we get
\[\dfrac{|a_n|e^{x\lambda_n}}{|a_{k}|e^{x\lambda_{k}}}\leq \exp\Big\{-\frac{|n-k|\cdot\delta}{1+|\tau_{k}|}\Big\}\]
for all $k\in J$, $n\geq0$. Therefore,
\begin{equation}\label{nu}
\nu(x,F)=k,\ \mu(x, F)=|a_{k}|e^{x\lambda_{k}},\
x\in\left[-\frac{1}{R_{k}(1+|\tau_{k}|)},-\frac{1}{R_{k+1}(1+|\tau_{k}|)}\right).
\end{equation}

Denote
\[
E^*(\delta):=\bigcup\limits_{k=k_0(\delta)}^{+\infty}\left[-\dfrac{1}{R_{k}(1+|\tau_{k}|)},-\dfrac{1}{R_{k+1}(1+|\tau_{k}|)}\right),\\ \
E(\delta):=[0, +\infty)\setminus E^*(\delta).\]
Then for every $x>0,\ x\notin E(\delta)$
\begin{gather}
    |F(x+iy)-a_{\nu(x,F)}e^{(x+iy)\lambda_{\nu(x,F)}}|\leq\nonumber\\
    \leq \mu(x,F)\cdot\sum_{n\neq\nu(x,F)}\exp\Big\{-\frac{\delta\cdot |n-\nu(x,F)|}{1+|\tau_{\nu(x,F)}|}\Big\}\leq
    \frac{2e^{-\delta/2}}{1-e^{-\delta/2}}\cdot\mu(x,F) \label{osnovna}
\end{gather}
because $1+|\tau_{\nu(x,F)}|<2$\  ($x\in E^*(\delta)$). It remains to
prove that the logarithmic $h-$measure of a set $E(\delta)$ is
finite. Using
\begin{gather}E(\delta)\subset[0, x_0)\bigcup\Big(
\bigcup\limits_{k=k_0(\delta)+1}^{+\infty}\Big[-\frac{1}{R_{k}(1+|\tau_{k-1}|)},-\frac{1}{R_{k}(1+|\tau_{k}|)}\Big)\Big), \nonumber\\
x_0=-\frac{1}{R_{k_0(\delta)}(1+|\tau_{k_0(\delta)-1}|)},\nonumber
\end{gather}
we obtain
\begin{gather*}
\text{\rm h-log-meas}(E\cap
[x_0,+\infty))=\sum\limits_{k=k_0(\delta)+1}^{+\infty}\int\limits_{\big[-\frac{1}{R_{k}(1+|\tau_{k-1}|)},-\frac{1}{R_{k}(1+|\tau_{k}|)}\big)}h(x)d\ln
x\leq\nonumber\\
\leq\sum\limits_{k=k_0(\delta)+1}^{+\infty}h\left(-\frac{1}{R_{k}(1+|\tau_{k}|)}\right)\ln\dfrac{1+|\tau_{k-1}|}{1+|\tau_{k}|}=\nonumber\\
=\sum\limits_{k=k_0(\delta)+1}^{+\infty}h\left(-\frac{1}{R_{k}(1+|\tau_{k}|)}\right)\ln\Big(1+\dfrac{|\tau_{k-1}|-|\tau_{k}|}{1+|\tau_{k}|}\Big)\leq
\nonumber\\
\leq
\cdot\sum\limits_{k=k_0(\delta)}^{+\infty}h\left(-\frac{1}{R_{k+1}(1+|\tau_{k+1}|)}\right)(|\tau_{k}|-|\tau_{k+1}|),
\end{gather*}

Hence, using equality \eqref{tau3} we have
\begin{gather}\label{meas}
\text{\rm h-log-meas}(E\cap [x_0,+\infty))\leq
2\delta\cdot\sum\limits_{k=k_0(\delta)}^{+\infty}h\left(-\frac{1}{R_{k+1}(1+|\tau_{k+1}|)}\right)\dfrac{1}{\mu_{k+1}-\mu_{k}}.
\end{gather}
The condition $F\in\mathcal{D}_a(\Phi)$ implies
$$
x\Phi(x)\leq{\ln\mu(x,F)}=-\mu_{\nu(x-0,F)}+x\lambda_{\nu(x-0,F)}\leq x\lambda_{\nu(x-0,F)}\quad
(x\geq x_1>0),
$$
therefore
\begin{equation}\label{last}
x\leq\varphi(\lambda_{\nu(x-0,F)})\quad (x\geq x_1>0).
\end{equation}
Denote $\theta_{k}:=-\dfrac{1}{R_{k+1}(1+|\tau_{k}|)}$. By \eqref{nu}
we have $\nu(\theta_{k}-0)=k$, thus from
\eqref{tau3} and \eqref{last} it follows
\begin{gather}-\frac{1}{R_{k+1}(1+|\tau_{k+1}|)}=\theta_{k}\cdot\frac{1+|\tau_{k}|}{1+|\tau_{k+1}|}=
\theta_{k}\cdot\Big(1+\frac{|\tau_{k}|-|\tau_{k+1}|}{1+|\tau_{k+1}|}\Big)\leq\nonumber\\
\leq\theta_{k}\cdot\Big(1+\frac{2\delta}{\mu_{k+1}-\mu_{k}}\Big)\leq\varphi(\lambda_{k})\cdot\Big(1+\frac{2\delta}{\mu_{k+1}-\mu_{k}}\Big)\label{teta}
\end{gather}
for all $k\geq k_1(\delta)$. Using inequality \eqref{teta} to
 inequality \eqref{meas}, we get
\begin{gather}
\text{\rm h-log-meas}(E(\delta)\cap [x_0,+\infty))\leq\nonumber \\
\leq 2\delta\cdot\sum\limits_{k=k_0(\delta)}^{k_2(\delta)-1}h\left(-\frac{1}{R_{k+1}(1+|\tau_{k+1}|)}\right)\dfrac{1}{\mu_{k+1}-\mu_{k}}+\nonumber \\
+2\delta\cdot\sum\limits_{k=k_2(\delta)}^{+\infty}h\left(\varphi(\lambda_{k})\cdot\Big(1+\frac{2\delta}{\mu_{k+1}-\mu_{k}}\Big)\right)\dfrac{1}{\mu_{k+1}-\mu_{k}}:=K(\delta)<+\infty,
\label{meas2}
\end{gather}
where $k_2(\delta)=\max\{k_0(\delta),k_1(\delta)\}$.  Relation
\eqref{meas2} implies that
\begin{gather*}
(\forall\ \delta>0)\colon\quad\text{\rm h-log-meas}(E(\delta)\cap
[x,+\infty))=o(1)\quad (x\to +\infty).
\end{gather*}
We put now $\delta_n=n, \varepsilon_n=2^{-n}$\ $(n\geq 1)$. Then for
every $n\geq 1$ there exists $x_n\geq x_0$ such that
\begin{gather*}
\text{\rm h-log-meas}(E(\delta_n)\cap [x_n,+\infty))\leq
\varepsilon_n.
\end{gather*}
Without loss of generality we may assume that $x_n<x_{n+1}$\ $(n\geq
1)$. Denote $
E=\bigcup\limits_{n=1}^{+\infty}\big(E(\delta_n)\cap[x_n;x_{n+1})\big)
$.  Define a function $\gamma\colon [x_1,+\infty)\to [0,+\infty)$ by
equality $\gamma(x)=\dfrac{2e^{-\delta_n/2}}{1-e^{-\delta_n/2}}$ for
$x\in[x_n, x_{n+1})$. Then from inequality \eqref{osnovna} it
follows
\begin{gather}\label{end}
|F(x+iy)-a_{\nu(x,F)}e^{(x+iy)\lambda_{\nu(x,F)}}|\leq
\gamma(x)\cdot\mu(x,F)
\end{gather}
for all $x\in [x_1, +\infty)\setminus E$ uniformly in
$y\in\mathbb{R}$. But $\gamma(x)\to 0$\ $(x\to +\infty)$ and
$$
\text{\rm h-log-meas}(E\cap
[x_1,+\infty))\leq\sum_{n=1}^{+\infty}\text{\rm
h-log-meas}(E(\delta_n)\cap [x_n,x_{n+1}))\leq
\sum_{n=1}^{+\infty}\varepsilon_n=1.
$$
Thus, $ \text{\rm h-log-meas}(E)<+\infty.$

\section{Consequences and concluding remarks}

\begin{remark}\rm
Since $\lambda_n/\mu_n\to 0 $ $(n\to +\infty)$, we have $\lambda_n
<\mu_n$ for all $n$ large enough. So $\lambda_{n}$ {one can replace}
with $\mu_{n}$ in \eqref{3mm}.
\end{remark}

In the case $\beta=\sup\{\lambda_j:\ j\geq0\}= +\infty$ condition
\eqref{3mm} of Theorem \ref{the1} can be written in a simpler form.

Let $\Phi\in\mathcal{L}_+$ and
$\mathcal{D}^{*}_{a}(\Phi):=\bigcup\limits_{\rho>0}\mathcal{D}_{a}(\Phi_{\rho})$,
$\Phi_{\rho}(x):=\rho\cdot\Phi(x\rho)$.

\begin{Theorem}\label{the2} {\sl Let  $(\mu_n)$ be a sequence such that condition \eqref{e4}
holds, $h\in\mathcal{L}_{+}, \Phi\in \mathcal{L}_+$ and
$F\in\mathcal{D}^{*}_{a}(\Phi)$. If
\begin{gather}\label{3mm'}
(\forall\
b>0)\colon\qquad\sum\limits_{n=n_0}^{+\infty}\dfrac{h\left(b\varphi(b\lambda_{n})\right)}{\mu_{n+1}-\mu_{n}}<+\infty,
\end{gather}
then relation \eqref{e3} holds as
 $x\to +\infty$\ outside some set $E$ of finite logarithmic  $h$-measure uniformly in $y\in\mathbb{R}$.}
\end{Theorem}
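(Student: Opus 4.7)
The plan is to reduce Theorem \ref{the2} to Theorem \ref{the1}. By definition of $\mathcal{D}^{*}_{a}(\Phi)$ there exists $\rho>0$ with $F\in\mathcal{D}_{a}(\Phi_\rho)$, where $\Phi_\rho(x)=\rho\Phi(\rho x)\in\mathcal{L}$. So it suffices to verify that \eqref{3mm'} forces the hypothesis \eqref{3mm} of Theorem \ref{the1} when $\Phi$ is replaced by $\Phi_\rho$ (and correspondingly $\varphi$ by the inverse $\varphi_\rho$ of $\Phi_\rho$); then Theorem \ref{the1} applied to $F\in\mathcal{D}_{a}(\Phi_\rho)$ gives the desired conclusion.

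First I would compute the inverse: solving $y=\rho\Phi(\rho x)$ yields $\varphi_\rho(y)=\rho^{-1}\varphi(y/\rho)$. Next, since \eqref{e4} forces $\mu_{n+1}-\mu_n\to+\infty$, for every fixed $b>0$ we have $1+b/(\mu_{n+1}-\mu_n)\leq 2$ for all sufficiently large $n$. Combining these two observations,
\[
\varphi_\rho(\lambda_n)\cdot\Big(1+\frac{b}{\mu_{n+1}-\mu_n}\Big)\leq \frac{2}{\rho}\,\varphi\!\Big(\frac{\lambda_n}{\rho}\Big)
\]
for all sufficiently large $n$.

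The only point requiring any real thought is to absorb the external factor $2/\rho$ and the internal dilation $\lambda_n/\rho$ into a single constant $b'$ so that the right-hand side is dominated by $b'\varphi(b'\lambda_n)$. Setting $b':=2/\rho$ works: since $b'\geq 1/\rho$ and $\varphi$ is increasing, $\varphi(\lambda_n/\rho)\leq\varphi(b'\lambda_n)$, and hence $(2/\rho)\varphi(\lambda_n/\rho)\leq b'\varphi(b'\lambda_n)$. The monotonicity of $h$ then bounds the $n$-th summand of \eqref{3mm} (with $\varphi_\rho$) by $h(b'\varphi(b'\lambda_n))/(\mu_{n+1}-\mu_n)$, whose summability is exactly \eqref{3mm'} with parameter $b'$. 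Since $b>0$ was arbitrary, \eqref{3mm} holds for $\Phi_\rho$; Theorem \ref{the1} then applies and delivers \eqref{e3} outside a set of finite logarithmic $h$-measure. The proof is essentially bookkeeping of the $\rho$-scaling; no new analytic input beyond Theorem \ref{the1} is needed.
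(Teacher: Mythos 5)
Your reduction is correct and is exactly the route the paper intends: Theorem \ref{the2} is stated as a consequence of Theorem \ref{the1}, and your computation $\varphi_\rho(y)=\rho^{-1}\varphi(y/\rho)$, together with bounding $1+b/(\mu_{n+1}-\mu_n)\le 2$ for large $n$ (via \eqref{e4}) and absorbing the factors into $b'=2/\rho$ using the monotonicity of $\varphi$ and $h$, is the natural verification that \eqref{3mm'} implies \eqref{3mm} for $\Phi_\rho$. No gaps; this matches the paper's (unwritten but clearly intended) argument.
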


\begin{remark}\rm In the case $\Phi(x)=e^x/x$ we obtain that
$\mathcal{D}^{*}_{a}(\Phi)$ is the class Dirichlet series of nonzero
lower R-order
$$
\varliminf_{x\to +\infty}\frac{\ln\ln M(x,F)}{x}:=\rho_R[F]\in
(0,+\infty]
$$
and  condition \eqref{3mm'} from condition
\begin{gather}\label{3mm''}
(\forall\
b>0)\colon\qquad\sum\limits_{n=n_0}^{+\infty}\dfrac{h\left(b\ln\lambda_{n}\right)}{\mu_{n+1}-\mu_{n}}<+\infty
\end{gather}
follows.
\end{remark}

\begin{example}\rm Well known, if $\ln n=o(\lambda_n\ln\lambda_n)$\ and $\ln\lambda_{n+1}\sim\ln\lambda_n$ $(n\to +\infty)$ then for the function $F\in D$  with coefficients
$$
|a_n|=\exp\Bigl\{-\frac 1{\rho}\lambda_n\ln\lambda_n\Bigl\}
$$
we have $\rho_R[F]=\rho$. Thus condition \eqref{3mm''} follows from
the condition
\begin{gather*}
(\forall\
b>0)\colon\qquad\sum\limits_{n=n_0}^{+\infty}\dfrac{h\left(b\ln\lambda_{n}\right)}{\lambda_{n+1}\ln\lambda_{n+1}-\lambda_{n}\ln\lambda_{n}}<+\infty.
\end{gather*}
\end{example}

\begin{question}\rm  {Is the description of exceptional sets in our Theorems
\ref{the1} and \ref{the2}  the best possible?}
\end{question}
\begin{question}\rm {Are conditions \eqref{3mm} and \eqref{3mm'}  in our Theorems
\ref{the1} and \ref{the2} necessary?}
\end{question}

\vskip15pt

\bigskip

\end{document}